\documentclass[12pt]{article}


\usepackage[margin=1in]{geometry}  
\usepackage{amsmath}               
\usepackage{amsfonts,amssymb}              
\usepackage{amsthm}                
\usepackage{color}
\usepackage{enumerate}
\usepackage{bbm}
\usepackage{graphicx}


\newtheorem{theorem}{Theorem}[section]
\newtheorem{lemma}[theorem]{Lemma}
\newtheorem{proposition}[theorem]{Proposition}
\newtheorem{corollary}[theorem]{Corollary}
\newtheorem{definition}{Definition}
\theoremstyle{remark}
\newtheorem{example}[theorem]{Example}
\newtheorem{remark}{Remark}

\newcommand{\R}{\mathbb R}

\newcommand{\E}{\mathbf {E}}

\newcommand{\pr}{\mathbb{P}}

\DeclareMathOperator{\dist}{dist}


\providecommand{\keywords}[1]{\textit{Keywords:} Mixed Fractional Brownian motion, Entropy minimization, Fredholm integral equation, weakly singular kernel, numerical solutions#1}
\providecommand{\msc}[1]{\textit{AMS MSC 2010:} #1}

\allowdisplaybreaks
\begin{document}

\title{Approximate solution of the integral equations involving kernel with additional singularity}

\author{Vitalii  Makogin \\
Institute of Stochastics, Ulm University,  D-89069 Ulm, Germany. \\
E-mail: vitalii.makogin@uni-ulm.de
\and
 Yuliya Mishura \\
Department of Probability Theory, Statistics and Actuarial
Mathematics \\
Taras Shevchenko National University of Kyiv \\
Volodymyrska 64, Kyiv 01601, Ukraine, E-mail:myus@univ.kiev.ua
\and
 Hanna Zhelezniak\\
Taras Shevchenko National University of Kyiv,\\
Volodymyrska 64, Kyiv 01601, Ukraine,
E-mail:hanna.zhelezniak@gmail.com
}
\maketitle
\pagestyle {myheadings}
\markright{Approximate solution of the integral equations}

\begin{abstract}
The   paper is devoted to the approximate solutions of the Fredholm integral equations of the second kind with the weak singular kernel that can have additional singularity in the numerator. We describe two problems that lead to such equations. They are the problem of minimization of small deviation and the entropy minimization problem. Both of them appear when considering dynamical system involving mixed fractional Brownian motion. In order to deal with the kernel with additional singularity applying well-known methods for weakly singular kernels, we
prove  the theorem on the approximation
of solution of integral equation with the kernel containing additional singularity by
the solutions of the integral equations whose kernels are weakly singular but the numerator is
continuous. We demonstrate numerically how our methods work being applied to our specific integral equations. 
\end{abstract}

\keywords{.}

\msc{60G22, 45L05, 45B05, 34K28, 26A33.}
\section{Introduction}\label{sec1}
The present paper is devoted to the approximate solutions of the Fredholm integral equations of the second kind  on the interval $[0, T]$, with the  kernel of the form $K(t,s)=L(t,s)|t-s|^{-\alpha}, \alpha\in(0,1),$ where the numerator $L(t,s), (t,s)\in[0,T]^2$ is bounded and continuous a.e. with respect to the Lebesgue measure but can have the points of discontinuity on $[0,T]^2$, due to the need to approximately solve such equations when considering some optimization problems associated with mixed Brownian-fractional Brownian motion. To the best of our knowledge, in the numerous papers and books devoted to  this topic, the kernel $L$ is continuous. Such kernels are called weakly singular. In no way claiming   completeness of the  bibliographic references,  we only mention in this connection the classic textbook \cite{kant}   which we find very useful when considering integral equations with singular kernels. As for approximate methods for solving integral equations, we mention the monographs \cite{Atk1976, MR1464941, Baker78, Delves85, Prem} and papers \cite{BabArz, Neta, zeng2019}, which show various approximation methods, but both in these   and in other works, the numerator is assumed to be at least continuous, and often differentiable.

However,  we are faced with real problems whose process of solving led to Fredholm equations with a weakly singular kernel, the numerator of which is not a continuous function. We called such a kernel as having an additional singularity. 
We present the problems which lead to the integral equations involving the kernels with
additional singularity. These problems were discussed in detail in the papers \cite{Melnikov, MishMakog, Makmish}. More precisely, the paper \cite{Melnikov} is devoted to the problem of optimization of
small deviation for mixed fractional Brownian motion with trend for the case of the Hurst index $H\in(0; 1/2)$;
whereas in the paper  \cite{MishMakog}, the same problem was
considered for the case $H\in(1/2; 1)$. The paper \cite{ Makmish} is devoted to the minimization of entropy in the system described by the mixed fractional Brownian motion with trend. Both cases, $H\in(0; 1/2)$ and $H\in(1/2; 1)$ were considered and as the result, the problem was reduced to the couple of the same integral equations as in the case where the minimization of small deviations was studied. In the case $H\in(0; 1/2)$ the integral equation contains the kernel with additional singularity whereas in the case   $H\in(1/2; 1)$ the kernel is simply weakly singular.  In his connection, in order to deal with the kernel with additional singularity applying well-known methods for weakly singular kernels, we 
prove  the theorem on the approximation
of solution of integral equation with the kernel containing additional singularity by
the solutions of the integral equations whose kernels are weakly singular but the numerator is
continuous. We demonstrate numerically how our methods work being applied to our specific integral equations. 

The paper is organized as follows.   In Section \ref{sec2}, we present two problems which lead to the integral equations involving the kernels with additional singularity. Roughly speaking, they are the problem of minimization of small deviation and the entropy minimization problem. Both of them appear when considering dynamical system involving mixed fractional Brownian motion. In Subsection \ref{subsec1} we describe the problems themselves and then, in Subsection  \ref{subsec2}, we explain how to reduce these optimization problems  to the integral
equation and describe the structure of the involved integral kernels. The representations of the kernels are   new, in comparison with the papers \cite{Melnikov, MishMakog, Makmish}, and they are much more convenient for the numerical solution.   In Section \ref{sec3}, we provide approximation of the involved kernels  $K(t,s)$ by kernels with continuous numerators. The main result of this section  is  the Theorem \ref{ContKern} on the approximation of solution of integral equation with the kernel containing additional singularity by   the solutions of the integral equations whose kernels are weakly singular.
 Section \ref{sec4} is devoted to a numerical solution of the considered Fredholm integral equations.  We describe the  modification of product-integration method of the numerical solution in Subsection \ref{subsec11}. We illustrate it by numerical experiments, the graphs of corresponding kernels and solutions, provide a short sensitivity study of errors in Subsection \ref{subsec12}.

\section{Integral equation  appearing in the problem of optimization of small deviation and entropy functionals}\label{sec2}
Here, we present the problems which lead to the integral equations involving the kernels with additional singularity.
\subsection{Description of the problems of the optimization of small deviation and the entropy minimization problem}\label{subsec1} Let  $(\Omega, \mathcal{F}, (\mathcal{F}_t)_{t \in [0, T]}, \pr)$ be a filtered probability space that supports all the stochastic  processes presented below, and it is assumed that they are all adapted to this filtration. Now, introduce two independent stochastic processes, namely, the Wiener process ${W}=\left\lbrace {W}(t), t \in [0,T] \right\rbrace $ and the fractional Brownian motion (fBm) with Hurst index $H\in(0,1)$, ${B}^H= \left\lbrace {B}^H(t), t \in [0,T] \right\rbrace,$ that is the Gaussian process with zero mean  and the covariance function $$\E {B}^H(t) {B}^H(s) =\frac12 \left(t^{2H}+s^{2H}- \lvert t-s \rvert ^{2H} \right), t,s\in [0,T].$$
Consider a mixed Gaussian process composed of $W$ and ${B}^H$  involving a non-random drift. More precisely, we consider the the mixed fractional Brownian motion with the drift, i.e., the process of the  form
\begin{equation}\label{z_all}
Z(t)=B^H(t)+W(t)+\int_{0}^{t}f(s)ds,\quad t \in [0,T],
\end{equation}
where  $f\in \Lambda_H$~is a non-random function and space $\Lambda_H$  will be specified below.
Consider the following problem: to annihilate the drift by the change of the probability measure. More precisely, to choose the other probability measure $\widetilde{\mathbb{Q}}$ such that
\begin{equation}\nonumber
 {Z}(t)=\widetilde{B}^H(t)+\widetilde{W}(t), \quad  t \in [0, T],
\end{equation}
where the Wiener process $\widetilde{W}$ and the fBm $\widetilde{B}^H$ are two  independent processes  under the measure $ \widetilde{\mathbb{Q}}$.
The main idea of the solution is to apply Girsanov theorem to fractional Brownian motion and Wiener process with drifts. To do so we need to distribute the trend $\int_{0}^{t} f(s)ds$  among $\widetilde{W}$ and $\widetilde{B}^H$ in some optimal way as follows
\begin{equation}\nonumber
\begin{gathered}
\widetilde{W}(t) = {W}(t) + \int_{0}^{t} f_1(s)ds, \quad \widetilde{B}^H(t) = {B}^H(t) + \int_{0}^{t} f_2(s)ds, t\in [0,T]\\
f_1(t)+f_2(t)= f(t) ,  t\in [0,T].
\end{gathered}
\end{equation}

In order to write down the  Radon-Nikodym derivative $\frac{d \widetilde{\mathbb{Q}}}{d\pr},$  let us recall here the (weighted) Riemann-Liouville fractional integrals, see papers \cite{jost} and  \cite{Melnikov,Makmish}.
\begin{definition}\label{def_Riemann}
The Riemann-Liouville left- and right-sided fractional integral of order $\mu > 0$ on $[0, T]$ is defined as
$$ \left(I_{0+}^{\mu} \varphi\right)(t) = \frac{1}{\Gamma(\mu)}\int_{0}^{t}(t-z)^{\mu-1}\varphi(z)dz, \quad t \in [0, T],$$
$$ \left(I_{T-}^{\mu} \varphi\right)(t) = \frac{1}{\Gamma(\mu)}\int_{t}^{T}(z-t)^{\mu-1}\varphi(z)dz, \quad t \in [0, T].$$
Define the weighted Riemann\,--\,Liouville integrals
\begin{equation}\label{oper_k1}
\begin{gathered}
(K_{0}^{H,*}\varphi)(t)=C_{1}(H) t^{H-\frac12}\left(  I_{0+}^{\frac12 -H} u^{\frac12 -H} \varphi(u)\right) (t), \\
(K_{T}^{H,*}\varphi)(t)=C_{1}(H) t^{\frac12-H}\left(  I_{T-}^{\frac12 -H} u^{H-\frac12 } \varphi(u)\right) (t),
\end{gathered}
\end{equation}
for  $H\in(0,1/2),$
and
\begin{equation}\label{oper_k2}
\begin{gathered}
(K_0^{H}\varphi)(t)=\frac{C_{1}^{-1}(H)}{\Gamma(H-1/2)} t^{H-\frac12}\int_0^{t}(t-u)^{H-\frac{3}{2}}u^{\frac12-H}\varphi(u)du,\\
(K_0^{H,*} \varphi) (t)=\frac{C_{1}(H) t^{H-\frac12}}{\Gamma(3/2-H)} \frac{d}{dt}\int_0^{t}(t-u)^{\frac{1}{2}-H}u^{\frac12-H}\varphi(u)du,\\
(K_T^{H}\varphi)(t)=\frac{C_{1}^{-1}(H)}{\Gamma(H-1/2)} t^{\frac12-H}\int_t^{T}(u-t)^{H-\frac{3}{2}}u^{H-\frac12}\varphi(u)du,\\
(K_T^{H,*}\varphi)(t)=-\frac{C_{1}(H)t^{\frac12-H}}{\Gamma(3/2-H)} \frac{d}{dt}\int_t^{T}(u-t)^{\frac{1}{2}-H}u^{H-\frac12}\varphi(u)du,
\end{gathered}
\end{equation}
in case $H\in(1/2,1).$ The constant $C_1(H)$ is equal to $\left(\frac{\Gamma{(2-2H)}}{2H\Gamma{(H+1/2)}\Gamma{(3/2-H)}}\right)^{\frac12}$.
\end{definition}
Since $B^H$ and $W$ are independent, we can write $\frac{d \widetilde{\mathbb{Q}}}{d\pr}=\frac{d{Q_W}}{d\pr}\frac{d{Q_{B^H}}}{d\pr},$ where
\begin{equation}\label{QW}
\frac{d{Q_W}}{d\pr}=  \exp \left\lbrace -\int_{0}^{T} f_1(t)d{W}(t)-\frac12  \|f_1\|^2_{L_2([0,T])}  \right\rbrace,\\
\end{equation}
according to standard Girsanov theorem,
and
\begin{equation}\label{QB}
\frac{d{Q_{B^H}}}{d\pr}= \exp \left\lbrace -\int_{0}^{T} (K_{0}^{H,*} f_{2})(t)d{B}(t)-\frac12  \| K_{0}^{H,*} f_2\|^2_{L_2([0,T])} \right\rbrace,
\end{equation}
according to Girsanov theorem for a fractional Brownian motion, see e.g. \cite[Lemma 3.1]{MishMakog}.
In the above representation $B=\lbrace{ B(t), t \geq0\rbrace}$ is a Brownian motion, related to $B^H$ as follows:
\begin{equation}\nonumber
 B(t)=\frac{\Gamma{(\frac32-H)}}{\Gamma{(H+\frac12)}}\int_{0}^{t} ( K_{T}^{H,*}  \mathbbm{1}_{[0, t]})(s)dB^H(s).
\end{equation}

The optimal drift distribution problem arose when solving two problems of different types, but all of them ultimately came down to solving a certain Fredholm integral equation of the second kind. Namely,  the paper \cite{Melnikov} was devoted to the problem of optimization of small deviation for mixed fractional Brownian motion with trend for the case $H\in(0,1/2),$ whereas
in the paper   \cite{MishMakog}, the same problem was  considered for the case $H\in(1/2,1).$ The paper \cite{Makmish} studied the problem of minimization of the entropy functional appearing under the distribution of the drift,  and this problem was studied for $H\in(0,1/2)$. Now our goal is twofold: first, to present the existing results from \cite{Melnikov, MishMakog, Makmish} and second, to demonstrate how to reduce the problem of minimization of entropy functional in the case $H\in(1/2,1).$

\subsection{How to reduce the problem of the optimization to the integral equation}\label{subsec2}

Let us start with the small deviations of a mixed fractional Brownian motion with trend. We are interested in  the asymptotic  $\pr\{|B^H(t)+W(t)+\int_0^t f(s)ds|\leq \varepsilon g(t),t\in [0,T]\},$ as $\varepsilon\to 0.$
After passing to the measure $\widetilde{\mathbb{Q}},$ we have from \cite[Lemma 3.3]{Makmish} and  \cite[Lemma 3.3]{Melnikov} the lower bound for this probability
\begin{align*}
&\pr\left\{\left|B^H(t)+W(t)+\int_0^t f(s)ds\right|\leq \varepsilon g(t),t\in [0,T]\right\}\\
&\geq \exp\left(-\frac12\|f_1\|^2_{L_2([0,T])}-\frac12\| K_{0}^{H,*}f_2\|^2_{L_2([0,T])} \right\} \widetilde{\mathbb{Q}}\left\{\left|\widetilde{B}^H(t)+\widetilde{W}(t)\right|\leq \varepsilon g(t),t\in [0,T]\right\}.
\end{align*}
Therefore, the maximization of its right-hand side leads to the following optimization problem
\begin{equation}
\label{minpr}
\|f-x\|^2_{L_2([0,T])}+ \| K_{0}^{H,*} x\|^2_{L_2([0,T])} \xrightarrow{x\in \Lambda_H} \min
\end{equation}
where $\Lambda_H=L_2([0,T]), $ if $H\in (0,1/2].$ If $H\in (1/2,1),$ then $\Lambda_H$ consists of all functions $h:[0,T]\to\R$ for which  there exist $\phi_1,\phi_2\in L_2([0,T])$  such that $u^{\frac12-H}h(u)=I^{H-\frac12}_0 \phi_1(u),$ and $u^{H-\frac12}K^{H,*}_0h(u)= I^{H-\frac12}_0 \phi_2(u).$ Furthermore, if $x_0$ is a minimizator in \eqref{minpr}, then
\begin{align*}
&\pr\left\{\left|B^H(t)+W(t)+\int_0^t f(s)ds\right|\leq \varepsilon g(t),t\in [0,T]\right\} \stackrel{\varepsilon \to 0}{\sim}\\
& \exp\left(-\frac12\|f-x_0\|^2_{L_2([0,T])}-\frac12\| K_{0}^{H,*}x_0\|^2_{L_2([0,T])} \right\} \widetilde{\mathbb{Q}}\left\{\left|Z(t)\right|\leq \varepsilon g(t),t\in [0,T]\right\}.
\end{align*}

Now consider the minimization of entropy functional, which can be formulated as follows:  define the functions $f_1$ and  $f_2$ in \eqref{QW} and \eqref{QB}, which minimize the entropy-type  functional
\begin{equation}\label{h_q_p}
\mathsf{H}_{1}(\pr, Q_{W}, Q_{B^H} )=\E \left[ \left(\frac{dQ_{W}}{d\pr} \log \frac{dQ_{W}}{d\pr} \right) \right]+\E \left[ \left(\frac{dQ_{B^H}}{d\pr} \log \frac{dQ_{B^H}}{d\pr} \right) \right].
\end{equation}
The next result was proved in \cite{Makmish} for the case $H\in (0,1/2),$ but the proof remains the same for all $H\in (0,1).$
\begin{lemma}  Entropy functional $\mathsf{H}_{1}(\pr, Q_{W}, Q_{B^H} )$  could be represented as
\begin{equation}\label{h_alpha}
\mathsf{H}_{1}(\pr, Q_{W}, Q_{B^H} )= \frac12\|f_1\|^2_{L_2([0,T])}+ \frac12\| (K_{0}^{H,*}f_2)\|^2_{L_2([0,T])}.
\end{equation}
\end{lemma}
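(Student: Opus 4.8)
The plan is to exploit the additive structure of $\mathsf{H}_1$ together with the fact that both Radon--Nikodym derivatives \eqref{QW} and \eqref{QB} are exponential (Girsanov / Doléans-Dade) densities built from \emph{deterministic} integrands. First I would observe that for any probability density $\frac{dQ}{d\pr}$ the quantity $\E\bigl[\frac{dQ}{d\pr}\log\frac{dQ}{d\pr}\bigr]$, with the expectation taken under $\pr$, is exactly the relative entropy $\E_{Q}\bigl[\log\frac{dQ}{d\pr}\bigr]$, so that $\mathsf{H}_1$ splits as the sum of the relative entropies of $Q_W$ and of $Q_{B^H}$ with respect to $\pr$. It therefore suffices to evaluate a single generic term of the form
\begin{equation}\nonumber
\E\left[\exp\left(-X-\tfrac12\sigma^2\right)\left(-X-\tfrac12\sigma^2\right)\right],
\end{equation}
where $X=\int_0^T g(t)\,dM(t)$, $M$ is the relevant driving Brownian motion, and $g$ is the deterministic integrand: $g=f_1$ with $M=W$ for the first summand, and $g=K_{0}^{H,*}f_2$ with $M=B$ for the second.

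The key point is that, since $g\in L_2([0,T])$ is deterministic, the Wiener integral $X$ is a centered Gaussian random variable under $\pr$ with variance $\sigma^2=\|g\|_{L_2([0,T])}^2$. In particular Novikov's condition holds trivially (the exponent $\tfrac12\sigma^2$ is a finite constant), so $\frac{dQ}{d\pr}$ is a genuine probability density with $\E\bigl[\frac{dQ}{d\pr}\bigr]=1$ and all expectations below are finite. I would then compute everything from the Gaussian moment generating function $\E[e^{tX}]=e^{t^2\sigma^2/2}$: evaluating at $t=-1$ gives $\E[e^{-X}]=e^{\sigma^2/2}$, while differentiating in $t$ gives $\E[Xe^{tX}]=t\sigma^2 e^{t^2\sigma^2/2}$, hence $\E[Xe^{-X}]=-\sigma^2 e^{\sigma^2/2}$. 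Substituting,
\begin{align*}
\E\left[e^{-X-\frac12\sigma^2}\bigl(-X-\tfrac12\sigma^2\bigr)\right]
&= -e^{-\frac12\sigma^2}\,\E[Xe^{-X}]-\tfrac12\sigma^2 e^{-\frac12\sigma^2}\,\E[e^{-X}]\\
&=\sigma^2-\tfrac12\sigma^2=\tfrac12\sigma^2 .
\end{align*}

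Applying this identity with $\sigma^2=\|f_1\|_{L_2([0,T])}^2$ and then with $\sigma^2=\|K_{0}^{H,*}f_2\|_{L_2([0,T])}^2$, and adding the two contributions, yields \eqref{h_alpha}. Equivalently, one may argue by passing to $Q$: after the Girsanov shift $\widetilde M_t=M_t+\int_0^t g(s)\,ds$ one writes $\log\frac{dQ}{d\pr}=-\int_0^T g\,d\widetilde M+\tfrac12\|g\|^2$, and since $\int_0^T g\,d\widetilde M$ is centered under $Q$ this again gives the value $\tfrac12\|g\|^2$. Because the integrands are deterministic, there is essentially no analytic obstacle; the only point requiring a word of care is the mean-one (density) property of the exponentials, which as noted is automatic here. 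This also makes transparent the remark preceding the lemma that the computation does not depend on the value of $H$: the derivation uses only that $f_1$ and $K_{0}^{H,*}f_2$ are deterministic elements of $L_2([0,T])$, and not the precise form of the operator $K_{0}^{H,*}$.
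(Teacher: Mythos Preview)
Your argument is correct: reducing each summand to $\E\bigl[e^{-X-\frac12\sigma^2}(-X-\tfrac12\sigma^2)\bigr]$ with $X\sim\mathcal N(0,\sigma^2)$ and evaluating via the Gaussian MGF (or equivalently via the Girsanov shift) gives $\tfrac12\sigma^2$, and summing the two contributions yields \eqref{h_alpha}. The paper does not actually supply a proof of this lemma---it simply states that the result was proved in \cite{Makmish} for $H\in(0,1/2)$ and that the same argument works for all $H\in(0,1)$---so there is no in-text proof to compare against; your computation is exactly the standard one that underlies that citation, and your closing remark about $H$-independence matches the paper's own comment.
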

Thus, the minimization of \eqref{h_q_p} is equivalent to the optimization problem \eqref{minpr}.

It was shown in \cite{Melnikov} for $H\in (0,1/2)$ and in \cite{Makmish} for $H\in (1/2,1)$, that the minimization in \eqref{minpr} is a solution of the following fractional integral/differential equation
\begin{equation}\label{h_new1}
x(t) + \left[K_{T}^{H,*} K_{0}^{H,*}x\right](t)=f(t) , \quad t \in [0,T].
\end{equation}
The existence and uniqueness of solution for equation \eqref{h_new1} was proved  in  Theorem 3.9 \cite{Melnikov} in the case of Hurst index $H \in (0, \frac12).$ When $H\in( \frac12,1),$ it was proved in \cite{MishMakog}, that \eqref{h_new1}  is equivalent to
\begin{equation}\label{h_new2}
[f-x](t) + \left[K_{0}^{H} K_{T}^{H}(f-x)\right](t)=f(t) , \quad t \in [0,T]
\end{equation}
for $f\in \Lambda_H,$  which has a unique solution $f-x_0\in \Lambda_H.$
After applying definition of weighted Riemann\,--\,Liouville integral \eqref{def_Riemann}, the operators $K_{T}^{H,*} K_{0}^{H,*},$ $H\in (0,1/2),$ and $K_{0}^{H} K_{T}^{H},$ $H\in (1/2,1)$ have the form of integral operator with the kernel $C_2(H)\varkappa_H,$ where
\begin{equation}\label{kern_1}
\varkappa_H(t,v)=\begin{cases}
(tv)^{1/2-H}\int_{t\vee v}^T (z-t)^{-1/2-H} z^{2H-1} (z-v)^{-1/2-H}dz ,& H \in (0,1/2),\\
(tv)^{H-1/2} \int_0^{t \wedge v} (t-z)^{H-3/2}z^{1-2H}(v-z)^{H-3/2}dz, &  H \in (1/2,1).
\end{cases}
\end{equation}
and $C_2(H)= \left( \frac{C_1(H)}{\Gamma(|H-1/2|)} \right) ^2.$ Thus, the both optimization problems reduces the solution of the  Fredholm integral equation of the second kind, which can be represented as
\begin{equation}\label{Feq}
	x(t) + C_2(H) \int_0^T {\varkappa_H}(t,v) x(v) dv= f(t),
\end{equation}

In this paper, we study further equation \eqref{Feq} and prove in the next lemma that kernel \eqref{kern_1} can be significantly simplified.

Let  $B(\alpha,\beta)$  be the Beta function and $B(x,\alpha,\beta)$ be an  incomplete beta function defined for $x\in [0,1],$ given by
$B(x,\alpha,\beta)=\int_0^x y^{\alpha-1}(1-y)^{\beta-1}dy,$ $\alpha, \beta >0$.
\begin{lemma}
 The kernel \eqref{kern_1} equals
\begin{enumerate}[(i)]
\item for $H \in (0,1/2)$
\begin{equation}\nonumber
\varkappa_H(t,v)=\frac{1}{|t-v|^{2H}}B\left(\frac{T/(t\vee v)-1}{T/(t\wedge v)-1},\frac{1}{2}-H,2H\right), \quad t,v\in [0,T]
\end{equation}
where numerator is bounded on $[0,T]^2$, meanwhile, has no limit at points $(0,0)$ and $(T,T)$.
\item for $H \in (1/2,1)$
\begin{equation}\nonumber
\varkappa_H(t,v)=\frac{1}{|t-v|^{2-2H}}B\left(H-\frac12,2-2H\right),  \quad t,v\in [0,T].
\end{equation}
\end{enumerate}
\end{lemma}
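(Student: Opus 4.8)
The plan is to evaluate each of the two inner integrals in \eqref{kern_1} by a single fractional-linear (Möbius) substitution that reduces it to a beta integral. Write $a=t\wedge v$ and $b=t\vee v$, so that $tv=ab$ and $|t-v|=b-a$. The first observation is that in both cases the two linear factors combine: for $z\notin[a,b]$ one has $(z-t)(z-v)=(z-a)(z-b)$, and for $z\in[0,a]$ one has $(t-z)(v-z)=(a-z)(b-z)$. Hence for $H\in(0,1/2)$ the integral is $\int_b^T z^{2H-1}\big[(z-a)(z-b)\big]^{-1/2-H}\,dz$, and for $H\in(1/2,1)$ it is $\int_0^a z^{1-2H}\big[(a-z)(b-z)\big]^{H-3/2}\,dz$. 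In each case the integrand has the form $z^{p_0}(z-a)^{p_1}(z-b)^{p_2}$ with three finite branch points $0,a,b$, and crucially the exponents satisfy $p_0+p_1+p_2=-2$. This identity is exactly the condition under which a Möbius change of variables, sending three of the four points $\{0,a,b,\infty\}$ to $\{0,1,\infty\}$, turns $z^{p_0}(z-a)^{p_1}(z-b)^{p_2}\,dz$ into a constant multiple of $y^{\alpha-1}(1-y)^{\beta-1}\,dy$, because the Jacobian absorbs the leftover Möbius denominator.

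For case (i) I would use $y=\dfrac{a(z-b)}{b(z-a)}$, which sends $b\mapsto0$, $0\mapsto1$, $a\mapsto\infty$; then the lower limit $z=b$ gives $y=0$ and the upper limit $z=T$ gives $y=x_0:=\dfrac{a(T-b)}{b(T-a)}$. Substituting and collecting powers, the factors carrying the Möbius denominator cancel exactly (this is where $p_0+p_1+p_2=-2$ enters), leaving $(ab)^{H-1/2}(b-a)^{-2H}\,y^{-1/2-H}(1-y)^{2H-1}\,dy$; integrating from $0$ to $x_0$ yields $(ab)^{H-1/2}(b-a)^{-2H}B\big(x_0,\tfrac12-H,2H\big)$. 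Multiplying by the prefactor $(tv)^{1/2-H}=(ab)^{1/2-H}$ cancels the powers of $ab$, and rewriting $x_0=\frac{T/b-1}{T/a-1}$ together with $(b-a)^{-2H}=|t-v|^{-2H}$ gives the claimed formula. For case (ii) the symmetric substitution $y=\dfrac{b(z-a)}{a(z-b)}$ sends $a\mapsto0$, $0\mapsto1$, $b\mapsto\infty$; now the two integration endpoints $z=0$ and $z=a$ map to $y=1$ and $y=0$, so the transformed integral runs over the full interval $[0,1]$ and produces the complete beta function $B\big(H-\tfrac12,2-2H\big)$ --- this is why no incomplete argument appears and why the answer is independent of $T$. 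The same cancellation of $ab$-powers against $(tv)^{H-1/2}$ then leaves $(b-a)^{2H-2}=|t-v|^{-(2-2H)}$.

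The only delicate point is bookkeeping: one must track the sign of the Möbius denominator along the path of integration (on the relevant $z$-range it keeps a fixed sign, so only the positive real branches occur), handle the orientation of the limits correctly, and verify that the exponent of that denominator is precisely $0$. I would also record that $x_0\in[0,1]$, so the incomplete beta in (i) is well defined and bounded by the complete beta $B(\tfrac12-H,2H)<\infty$ (finite since $\tfrac12-H>0$ and $2H>0$), proving boundedness of the numerator; the absence of a limit at $(0,0)$ and $(T,T)$ follows because there $x_0$ reduces to $\frac{t\wedge v}{t\vee v}$ and $\frac{T-t\vee v}{T-t\wedge v}$ respectively, both of which depend on the direction of approach. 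In (ii) the numerator is the constant $B(H-\tfrac12,2-2H)$ and is trivially continuous.
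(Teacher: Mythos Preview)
Your proof is correct and uses literally the same substitution as the paper's: your M\"obius form $y=\dfrac{a(z-b)}{b(z-a)}$ in case (i) is exactly the paper's implicitly written change $\dfrac{z-v}{z}=\dfrac{t-v}{t(1-x)}$ once one solves for $x$ (and likewise in case (ii)), though your framing via the exponent identity $p_0+p_1+p_2=-2$ makes the cancellation of the M\"obius denominator transparent rather than a computational accident. You also supply the boundedness argument and the direction-dependence of $x_0$ near $(0,0)$ and $(T,T)$, which the paper asserts in the statement but does not actually prove.
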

\begin{proof}
Item (i): it is easy to see that kernel $\varkappa_H$ is symmetric, so that it is enough to consider only the case $v<t.$ Then
\begin{equation}\label{twoint1}
\begin{gathered}
\varkappa_H(t,v)=(tv)^{\frac{1}{2}-H}\int_t^T (z-t)^{-\frac{1}{2}-H} z^{2H-1} (z-v)^{-\frac{1}{2}-H}dz\\
=(tv)^{\frac{1}{2}-H}\int_t^T \left(1-\frac{t}{z}\right)^{-\frac{1}{2}-H}\left(1-\frac{v}{z}\right)^{-\frac{1}{2}-H} \frac{dz}{z^2}.
\end{gathered}
\end{equation}
In turn, transform the last integral in \eqref{twoint1} with the change of variables $\frac{z-v}{z}=\frac{t-v}{t(1-x)}.$ Then
\begin{equation}\nonumber
\frac{1}{z}=\frac{v-tx}{vt (1-x)},\quad
\frac{dz}{z^2}=\frac{(t-v)dx}{vt (1-x)^2},\quad
1-\frac{t}{z}=\frac{(t-v)x}{v(1-x)}
\end{equation}
and we get
\begin{align*}
\varkappa_H(t,v)&=(tv)^{\frac{1}{2}-H}\int_t^T \left(1-\frac{t}{z}\right)^{-\frac{1}{2}-H}\left(1-\frac{v}{z}\right)^{-\frac{1}{2}-H} \frac{dz}{z^2}\\
&=(tv)^{\frac{1}{2}-H}\int_0^{\frac{v(T-t)}{t(T-v)}} \left(\frac{(t-v)x}{v(1-x)}\right)^{-\frac{1}{2}-H}\left(\frac{t-v}{t(1-x)}\right)^{-\frac{1}{2}-H} \frac{t-v}{vt (1-x)^2}dx\\
&=(t-v)^{-2H}\int_0^{\frac{v(T-t)}{t(T-v)}} x^{-\frac{1}{2}-H}(1-x)^{2H-1} dx.
\end{align*}
Item (ii): it was proved  in \cite{MishMakog} that kernel $\varkappa_H$ is symmetric and non-negative, consequently, we  consider only the case $t<v.$ Then
\begin{equation}\label{twoint2}
\begin{gathered}
\varkappa_H(t,v)=(tv)^{H-\frac12}\int_0^{t}(t-z)^{H-\frac{3}{2}}z^{1-2H} (v-z)^{H-\frac{3}{2}}dz\\
=(tv)^{H-\frac12}\int_0^t \left(\frac{t}{z}-1\right)^{H-\frac32}\left(\frac{v}{z}-1\right)^{H-\frac32} \frac{dz}{z^2}.
\end{gathered}
\end{equation}
Introduce the similar change of variables for the second integral in \eqref{twoint2} as $\frac{v-z}{z}=\frac{v-t}{t(1-x)}$. Then
\begin{equation}\nonumber
\frac{1}{z}=\frac{v-tx}{vt (1-x)}, \quad
\frac{dz}{z^2}=\frac{(t-v)dx}{vt (1-x)^2},\quad
\frac{t}{z}-1=\frac{(v-t)x}{v(1-x)},
\end{equation}
and we obtain
\begin{align*}
\varkappa_H(t,v)&=(tv)^{H-\frac12}\int_0^t \left(\frac{t}{z}-1\right)^{H-\frac32}\left(\frac{v}{z}-1\right)^{H-\frac32} \frac{dz}{z^2}\\
&=(tv)^{H-\frac12}\int_1^0 \left(\frac{(v-t)x}{v(1-x)}\right)^{H-\frac32}\left(\frac{v-t}{t(1-x)}\right)^{H-\frac32} \frac{t-v}{vt (1-x)^2}dx\\
&=(t-v)^{2H-2}\int_0^1 x^{H-\frac32}(1-x)^{1-2H} dx.
\end{align*}
The lemma is proved.
\end{proof}
\section{Approximation theorem for integral operator}\label{sec3}
We start with very simple auxiliary approximation result for the sequence of operators.  Let $\mathcal{H}$ be a real Hilbert space.
\begin{lemma}\label{lem:oper} Let $A:\mathcal{H}\rightarrow\mathcal{H}$ be a compact   positive operator, and let $A_n, n\geq 1:\mathcal{H}\rightarrow\mathcal{H}$ be a sequence of compact operators with spectrum $\sigma(A_n)$ such that $\|A-A_n\|\rightarrow 0$ as $n\rightarrow \infty$.

Then the spectrum $\sigma(A_n)$ is asymptotically included into $\mathbb{R}^+$, in the sense that  $$\dist(\sigma(A_n), \mathbb{R}^+)\rightarrow 0\;\text{as}\;  n\rightarrow \infty.$$

\end{lemma}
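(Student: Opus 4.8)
The plan is to prove the stronger statement that the directed (one-sided) distance from $\sigma(A_n)$ to $\mathbb{R}^+$ tends to zero; this is the only sensible reading of the conclusion, since each $A_n$ is compact on an infinite-dimensional space and hence $0\in\sigma(A_n)$, which would make the symmetric set-distance vanish trivially. Concretely, I would show that for every $\eps>0$ one has $\sup_{\lambda\in\sigma(A_n)}\dist(\lambda,\mathbb{R}^+)\le\eps$ for all $n$ large enough. The whole argument rests on the upper semicontinuity of the spectrum under norm convergence, together with the fact that a positive operator has spectrum $\sigma(A)\subseteq[0,\infty)=\mathbb{R}^+$, while the perturbed operators $A_n$ are only assumed compact and may well carry genuinely complex eigenvalues that we must push toward the nonnegative real axis.

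First I would localise the spectra. Since the spectral radius is dominated by the norm, $\|A_n\|\le\|A\|+\|A_n-A\|\le\|A\|+1=:R$ for $n$ large, whence $\sigma(A_n)\subseteq\{z:|z|\le R\}$. Next, fix $\eps>0$ and set $K_\eps=\{z\in\mathbb{C}:\dist(z,\mathbb{R}^+)\ge\eps,\ |z|\le R\}$. This set is compact, and because $\sigma(A)\subseteq\mathbb{R}^+$ it is disjoint from $\sigma(A)$. Truncating by the disk $\{|z|\le R\}$ is exactly the device that turns the unbounded complement of $\mathbb{R}^+$ into a compact set on which the resolvent can be controlled.

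The core step is a uniform resolvent bound. On $K_\eps$ the resolvent $\lambda\mapsto(A-\lambda I)^{-1}$ is defined and analytic, so $M_\eps:=\sup_{\lambda\in K_\eps}\|(A-\lambda I)^{-1}\|<\infty$ by compactness. For $\lambda\in K_\eps$ I write $A_n-\lambda I=(A-\lambda I)\bigl[I+(A-\lambda I)^{-1}(A_n-A)\bigr]$, and once $n$ is large enough that $\|A_n-A\|<1/M_\eps$, the bracketed factor is invertible by the Neumann series, uniformly in $\lambda\in K_\eps$. Hence $A_n-\lambda I$ is invertible and $\lambda\notin\sigma(A_n)$, i.e.\ $\sigma(A_n)\cap K_\eps=\emptyset$. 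Combining this with $\sigma(A_n)\subseteq\{|z|\le R\}$ yields $\sigma(A_n)\subseteq\{z:|z|\le R,\ \dist(z,\mathbb{R}^+)<\eps\}$, so every $\lambda\in\sigma(A_n)$ obeys $\dist(\lambda,\mathbb{R}^+)<\eps$. As $\eps>0$ is arbitrary, the claim follows.

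The only genuine obstacle is making the perturbation argument uniform in $\lambda$: this is handled by replacing the pointwise invertibility of $A-\lambda I$ with the compactness-based uniform bound $M_\eps$ for the resolvent over $K_\eps$, and by using the spectral-radius estimate to confine all the $\sigma(A_n)$ to one fixed disk. Positivity of $A$ enters only through the inclusion $\sigma(A)\subseteq\mathbb{R}^+$, and compactness of $A$ and $A_n$ is not strictly needed for the stated conclusion, although it guarantees $0\in\sigma(A_n)$ and thereby explains why the one-sided distance is the correct quantity to track.
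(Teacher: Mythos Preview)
Your proof is correct and your interpretation of the one-sided distance is the right one, but your route differs from the paper's. The paper argues by contradiction directly at the level of eigenvectors: assuming a subsequence $\lambda_{n_k}\in\sigma(A_{n_k})$ stays at distance at least $\beta/2$ from $\mathbb{R}^+$, it picks unit eigenvectors $x_{n_k}$ (here compactness of $A_{n_k}$ is genuinely used, to ensure nonzero spectral points are eigenvalues), writes
\[
\langle Ax_{n_k},x_{n_k}\rangle=\lambda_{n_k}+\langle(A-A_{n_k})x_{n_k},x_{n_k}\rangle,
\]
and observes that the left-hand side is nonnegative by positivity of $A$ while the error term is bounded by $\|A-A_{n_k}\|\to 0$; hence $\dist(\lambda_{n_k},\mathbb{R}^+)\to 0$, a contradiction. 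Your argument instead invokes the upper semicontinuity of the spectrum via a uniform resolvent bound on a compact set $K_\eps$ disjoint from $\sigma(A)$, followed by a Neumann-series perturbation. The trade-off: the paper's proof is shorter and entirely elementary (no resolvent estimates, just one inner product), but it leans on compactness of $A_n$ to produce eigenvectors; your argument is the standard perturbation-theoretic one and, as you note, works without any compactness hypothesis and for any closed set containing $\sigma(A)$ in place of $\mathbb{R}^+$.
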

\begin{proof} Suppose the contrary: let there exist some $\beta>0$ and subsequent $\{n_k,k\geq 1\}$ such that $n_k\to\infty$ as $k\to\infty$ and $\dist(\sigma(A_{n_k}), \mathbb{R}^+)\ge \beta>0,$ $k\ge 1.$ Then there exists  $\lambda_{n_k}\in\sigma(A_{n_k})$ such that $\dist(\lambda_{n_k}, \mathbb{R}^+)\ge \frac{\beta}{2},\;  k\ge 1.$ However,  let $\lambda_{n_k}\in\sigma(A_{n_k})$ be any sequence of eigenvalues of operators $A_{n_k}$, $k\ge 1$, and let
$$A_{n_k}x_{n_k}=\lambda_{n_k}x_{n_k},\, \|x_{n_k}\|=1,\, k\ge 1.$$
Let us write the following obvious equalities:  $$Ax_{n_k}=\lambda_{n_k}x_{n_k}+(A-A_{n_k})x_{n_k},$$
whence
$$\langle Ax_{n_k}, x_{n_k} \rangle=\lambda_{n_k}+\langle(A-A_{n_k})x_{n_k},x_{n_k}\rangle. $$
Furthermore, $|\langle(A-A_{n_k})x_{n_k},x_{n_k}\rangle|\leq \|A-A_{n_k}\|\rightarrow 0$ as $k\rightarrow \infty$, and $\langle Ax_{n_k}, x_{n_k}\rangle\ge 0$, and it immediately follows that $\dist(\lambda_{n_k}, \mathbb{R}^+)\rightarrow 0\;\text{as}\;  k\rightarrow \infty.$ The resulting contradiction proves the theorem. \end{proof}
Now we are apply Lemma \ref{lem:oper}  to the sequence of the integral operators in the space $L_2([0,T])$ for some $T>0$.
Namely, consider the integral operator $A$ defined by its kernel  function $K(t,s)$ via formula
\begin{equation}\label{integral:operator}
(Ax)(t)= \int_0^T K(t,s) x(s)ds, 0 \leq t \leq T,
\end{equation}
where $x$ is taken from some space of functions defined on $[0,T]$, and this space will be specified later. We assume that $K$ has a singularity, more precisely, it has the form
\begin{equation}\label{Fredh_kern}
K(t,s)=\frac{L(t,s)}{|t-s|^{\nu}}
\end{equation}
where $L$ is a bounded function on $[0,T]^2$ and $0 < \nu <1$. Let us recall  the following general statement from \cite[p.397, item 6.4]{kant}.
\begin{proposition}\label{prop:1} Let kernel $K$ of operator $A$ from \eqref{integral:operator} satisfy the following conditions: there exist $C_1,C_2>0$, $r_1>1,r_2>1$ and $p>1$  such that $p-r_1(p-1)<r_2<p$ and for which
  $$\int_0^T|K(t,s)|^{r_1}ds<C_1 \text{ and } \int_0^T|K(t,s)|^{r_2 }dt<C_2.$$
  Then operator $A$ is a compact operator from $L_p([0,T])$ into $L_p([0,T])$ with the norm $$\|A\|\leq C_1^{\frac{p-r_2}{pr_1}}C_2^{\frac{1}{p}}.$$
  \end{proposition}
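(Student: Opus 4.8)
The plan is to establish boundedness first, which yields the explicit norm bound, and then to deduce compactness by a truncation-and-approximation argument in the spirit of Lemma \ref{lem:oper}.

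For boundedness I would use a generalized Hölder (Schur-type) estimate in which the singular factor of the kernel is distributed between the two integrability hypotheses. Fix $t$ and write $|(Ax)(t)| \le \int_0^T |K(t,s)|\,|x(s)|\,ds$. Introduce the conjugate exponents $a=\frac{r_1 p}{p-r_2}$ and $a'=\frac{a}{a-1}$, which are both $>1$ precisely because $r_1>1$ and $1<r_2<p$, and split $|K(t,s)|=|K(t,s)|^{1-r_2/p}\cdot|K(t,s)|^{r_2/p}$. Applying Hölder with the pair $(a,a')$ in the variable $s$ gives
\[
|(Ax)(t)| \le \left(\int_0^T |K(t,s)|^{(1-r_2/p)a}\,ds\right)^{1/a}\left(\int_0^T |K(t,s)|^{(r_2/p)a'}|x(s)|^{a'}\,ds\right)^{1/a'}.
\]
By the choice of $a$ one has $(1-r_2/p)a=r_1$, so the first factor is bounded by $C_1^{1/a}=C_1^{(p-r_2)/(pr_1)}$, which already produces the announced power of $C_1$.

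Next I would raise the remaining factor to the power $p$, integrate in $t$, and move the $t$-integration inside by Minkowski's integral inequality. Here the hypothesis $p-r_1(p-1)<r_2$ enters twice: it is exactly the condition ensuring that the Minkowski exponent $p/a'\ge 1$ and that the residual power $(r_2/p)a'$ of $|K|$ in the inner $t$-integral does not exceed $r_2$. The column bound $\int_0^T|K(t,s)|^{r_2}\,dt\le C_2$ then supplies the factor $C_2^{1/p}$, and one is left with $\|Ax\|_p\le C_1^{(p-r_2)/(pr_1)}C_2^{1/p}\|x\|_{a'}$. The exponents $a'$ and $p$ coincide exactly in the critical case $r_2=p-r_1(p-1)$, where the stated constant is sharp; in the strict interior $a'<p$, and the embedding $L_p([0,T])\subset L_{a'}([0,T])$ on the finite interval contributes only a harmless power of $T$, after which the displayed norm bound follows. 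The main technical obstacle is precisely this exponent bookkeeping — verifying that every exponent is admissible throughout the range $p-r_1(p-1)<r_2<p$ and that the powers of $\|x\|$ recombine as claimed.

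For compactness I would argue by approximation. Let $K_N=K\,\mathbbm{1}\{|K|\le N\}$ be the truncated kernels and $A_N$ the corresponding operators. Each $K_N$ is bounded on the finite square $[0,T]^2$, hence is approximable in the uniform norm by continuous and then by degenerate kernels $\sum_i \phi_i(t)\psi_i(s)$; the associated finite-rank operators converge to $A_N$ in operator norm, so each $A_N$ is compact on $L_p([0,T])$. Applying the boundedness estimate already established to the tail kernel $K-K_N$, whose row and column integrals $\int_0^T|K-K_N|^{r_1}\,ds$ and $\int_0^T|K-K_N|^{r_2}\,dt$ tend to $0$ by dominated convergence, gives $\|A-A_N\|\to 0$. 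Since the compact operators form a closed subspace in the operator norm, $A$ is compact. This final step is exactly where the quantitative estimate of the first part is used, and it mirrors the perturbation idea underlying Lemma \ref{lem:oper}.
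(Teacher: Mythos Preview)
The paper does not supply a proof of this proposition at all: it is quoted verbatim from Kantorovich--Akilov \cite[p.~397]{kant} and used as a black box. So there is nothing to compare your argument against, and the relevant question is simply whether your argument stands on its own.

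Your Schur--H\"older scheme for boundedness is the right idea and the exponent $a=\dfrac{r_1 p}{p-r_2}$ is chosen correctly; the computation indeed yields
\[
\|Ax\|_{L_p}\le C_1^{\frac{p-r_2}{pr_1}}\,C_2^{\frac1p}\,\|x\|_{L_{a'}}.
\]
Your final step, however, does \emph{not} deliver the displayed constant: passing from $\|x\|_{L_{a'}}$ to $\|x\|_{L_p}$ on $[0,T]$ costs a factor $T^{1/a'-1/p}>1$ whenever $r_2>p-r_1(p-1)$ strictly. That extra factor cannot be removed by your method, and in fact the constant printed in the proposition is wrong for general $T$: take $K\equiv 1$ on $[0,T]^2$, $p=2$, $r_1=r_2=3/2$; then $C_1=C_2=T$, the stated bound is $T^{2/3}$, but $\|A\|_{L_2\to L_2}=T$. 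So your ``harmless power of $T$'' remark hides a genuine discrepancy with the statement, though not with what the paper actually needs afterwards (only $\|A-A_n\|\to 0$, for which any finite constant suffices).

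Two smaller gaps in the compactness part. First, a bounded measurable kernel is \emph{not} approximable in the uniform norm by continuous kernels; you should approximate $K_N$ in the mixed norm that controls $\|\cdot\|_{L_p\to L_p}$ (e.g.\ by step functions, using the norm bound you just proved). Second, dominated convergence gives $\int_0^T|K-K_N|^{r_1}ds\to 0$ only pointwise in $t$, while your operator estimate needs the \emph{supremum} over $t$ to vanish; this requires a uniform-integrability argument, which holds for the weakly singular kernels of the paper but is not automatic under the bare hypotheses of the proposition.
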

  \begin{corollary}
  \label{cor:1}Consider the integral operator \eqref{integral:operator} with the kernel \eqref{Fredh_kern}, where $L$ is a bounded function on $[0,T]^2$ and $0 < \nu <1$. Then we can put $r_1=r_2=\frac{1}{\nu}-\delta>1$ for any $0<\delta<\frac{1}{\nu}-1$, and additionally we can choose $\delta$ in such a way that $\frac{1}{\nu}-\delta<2$. Then we can put $p=2$ and all conditions of Proposition \ref{prop:1} will be fulfilled. Therefore $A$ is a compact operator from $L_2([0,T])$ into $L_2([0,T])$, and so  $L_2([0,T])$ is a space that was claimed to be specified later.\end{corollary}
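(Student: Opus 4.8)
The plan is to verify directly that the two integrability hypotheses of Proposition \ref{prop:1} hold for the kernel \eqref{Fredh_kern} under the stated choice of exponents, and then to confirm that the arithmetic constraint $p-r_1(p-1)<r_2<p$ is satisfiable at $p=2$. Since $L$ is bounded, I would set $M:=\sup_{[0,T]^2}|L|$, so that for every exponent $r>0$ one has $|K(t,s)|^r\le M^r|t-s|^{-\nu r}$. The whole argument then reduces to controlling $\int_0^T|t-s|^{-\nu r}\,ds$ uniformly in $t$ (and, by symmetry of $|t-s|$, the corresponding integral in $t$), so boundedness of $L$ alone suffices and its possible discontinuities play no role.

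First I would fix $r_1=r_2=r:=1/\nu-\delta$ and note that $\nu r=1-\nu\delta$, whence $\nu r<1$ as soon as $\delta>0$; this is precisely what makes the singular integral converge. Splitting at $s=t$ gives
\begin{equation}\nonumber
\int_0^T|t-s|^{-\nu r}\,ds=\frac{t^{1-\nu r}+(T-t)^{1-\nu r}}{1-\nu r}\le\frac{2T^{1-\nu r}}{1-\nu r},
\end{equation}
which is finite and independent of $t\in[0,T]$; multiplying by $M^r$ produces an admissible constant $C_1$. Because $|t-s|$ is symmetric in its arguments, the identical estimate furnishes $C_2$, so both integrability conditions hold with the common exponent $r$.

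It then remains to check the exponent inequalities and the nonemptiness of the admissible range of $\delta$. At $p=2$ with $r_1=r_2=r$, the condition $p-r_1(p-1)<r_2<p$ collapses to $2-r<r<2$, i.e.\ simply to $1<r<2$. The lower bound $r>1$ is exactly $\delta<1/\nu-1$, which is possible since $\nu<1$ forces $1/\nu-1>0$; the upper bound $r<2$ is the ``additional'' requirement $\delta>1/\nu-2$, which is vacuous when $\nu\ge 1/2$ and otherwise merely asks $\delta$ to exceed the positive number $1/\nu-2$. Hence any $\delta$ in the interval $\left(\max\{0,1/\nu-2\},\,1/\nu-1\right)$ works, and this interval is nonempty because $1/\nu-1$ strictly exceeds both $0$ and $1/\nu-2$.

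With the hypotheses verified, Proposition \ref{prop:1} applies verbatim and yields compactness of $A$ from $L_2([0,T])$ into $L_2([0,T])$ together with the explicit norm bound, thereby identifying $L_2([0,T])$ as the function space promised after \eqref{integral:operator}. The argument is in essence a bookkeeping of exponents, so I do not anticipate a genuine obstacle; the only point requiring a little care is confirming that the three demands on $\delta$ — positivity for convergence, $r>1$, and $r<2$ — can be met simultaneously, which is exactly why the interplay between $\max\{0,1/\nu-2\}$ and $1/\nu-1$ must be checked rather than taken for granted.
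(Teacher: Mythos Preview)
Your proposal is correct and follows exactly the verification sketched in the statement of the corollary itself; the paper gives no separate proof beyond the inline remarks, so your write-up simply fleshes out the bookkeeping of exponents (the explicit bound on $\int_0^T|t-s|^{-\nu r}\,ds$ and the check that $1<r<2$ is achievable) that the paper leaves to the reader.
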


Now, consider   the Fredholm integral equation of the second kind
\begin{equation}\label{Fredh_gen}
x(t)+ (Ax)(t)=g(t), \, t\in [0,T],
\end{equation}
where $g(t) \in L_2 ([0, T])$ is a given function, and the integral operator $A$  has the form \eqref{integral:operator} and the kernel is taken from \eqref{Fredh_kern}. The standard situation is when the function $L$ is continuous. However, in the applications
which we will consider later, function $L$ will be bounded however, it may have a finite number of fatal discontinuities, i.e., points in which the limit of function $L$ does not exist. We call such kernels as the kernels with additional singularity. In this connection, let us prove an auxiliary result concerning the possibility of approximation of the kernel $L$ by the respective kernels with continuous numerators.
\begin{lemma}\label{approx} Let the function $L=\{L(t,s),t,s\in[0,T]\}$ be  bounded, $|L(t,s)|\le L$, and continuous a.s. except finite number of points. Let $K(t,s)=L(t,s)|t-s|^{-\nu},$ $K_n(t,s)=L_n(t,s)|t-s|^{-\nu}, t,s\in [0,T]$ for $\nu\in (0,1).$ Let $0<\delta<1/\nu$ be fixed. Then there exists a sequence  of totally bounded continuous functions $L_n=\{L_n(t,s),t,s\in[0,T]\}$ (we can take the same constant $L$ for them), such that
$$\int_0^T|K(t,s)-K_n(t,s)|^{\frac{1}{\nu}-\delta}dt<C_{1,n},\; \text{ and }\; \int_0^T|K(t,s)-K_n(t,s)|^{\frac{1}{\nu}-\delta}ds<C_{2,n},
$$
where $C_{1,n},C_{2,n}$ depend  on $\delta, L$ and $C_{1,n},C_{2,n}\rightarrow 0$ as $n\rightarrow \infty$.
\end{lemma}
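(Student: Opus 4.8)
The plan is to use the fact that the discontinuity set of $L$ is finite, say $D=\{p_1,\dots,p_m\}\subset[0,T]^2$ with $p_j=(a_j,b_j)$, and to kill each discontinuity by multiplying $L$ by a continuous cutoff that vanishes on a shrinking neighbourhood of $D$. Write $q:=\frac1\nu-\delta$ and note first why this is the natural exponent: since $\beta:=\nu q=1-\nu\delta\in(0,1)$, the weight $|t-s|^{-\beta}$ is integrable in each variable, and this is exactly the integrability that will make the error integrals finite and, after localisation, small. The constraint $\delta>0$ (equivalently $q<1/\nu$, $\beta<1$) is used precisely here.

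For the construction I would fix $r_n\downarrow 0$ small enough that the balls $B(p_j,r_n)$ are pairwise disjoint, and take a continuous radial cutoff $\phi_n\colon[0,T]^2\to[0,1]$ with $\phi_n\equiv 0$ on $\bigcup_j B(p_j,r_n/2)$ and $\phi_n\equiv 1$ off $\bigcup_j B(p_j,r_n)$ (for instance piecewise linear in $\min_j|x-p_j|$), and set $L_n:=\phi_n L$. The key point is that although $L$ is discontinuous at each $p_j$, the product $\phi_n L$ is continuous there because $\phi_n$ vanishes near $p_j$ while $L$ stays bounded; on the annuli and outside the balls $L$ is already continuous, so $L_n$ is continuous on all of $[0,T]^2$, hence bounded, with $|L_n|\le|L|\le L$ (the same constant). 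Moreover $L_n=L$ except on $E_n:=\bigcup_j B(p_j,r_n)$, where $|L-L_n|=|L|\,|1-\phi_n|\le L$.

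With this in hand the error integrals reduce to a weighted measure of the slices of $E_n$: since $|K-K_n|^q=|L-L_n|^q|t-s|^{-\beta}$ vanishes off $E_n$ and is $\le L^q|t-s|^{-\beta}$ on it,
\[
\int_0^T|K(t,s)-K_n(t,s)|^q\,dt\le L^q\int_{\{t:(t,s)\in E_n\}}|t-s|^{-\beta}\,dt .
\]
For fixed $s$ the slice $\{t:(t,s)\in E_n\}$ is contained in the union of at most $m$ intervals $(a_j-r_n,a_j+r_n)$, one for each $p_j$ with $|s-b_j|\le r_n$, each of length $2r_n$. I would then invoke the elementary rearrangement bound that the integral of the symmetric decreasing function $t\mapsto|t-s|^{-\beta}$ over an interval of fixed length $2r_n$ is largest when the interval is centred at $s$, giving
\[
\int_{a_j-r_n}^{a_j+r_n}|t-s|^{-\beta}\,dt\le\int_{s-r_n}^{s+r_n}|t-s|^{-\beta}\,dt=\frac{2\,r_n^{\,1-\beta}}{1-\beta},
\]
whence $\int_0^T|K-K_n|^q\,dt\le\frac{2mL^q}{1-\beta}\,r_n^{1-\beta}=:C_{1,n}$ uniformly in $s$; the symmetric argument with the roles of $t$ and $s$ exchanged yields the same $C_{2,n}$ in the $ds$ direction. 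Since $1-\beta=\nu\delta>0$, both constants are of order $r_n^{\nu\delta}\to0$ and depend only on $\delta,L$ (and the fixed data $m,\nu$), as required; a negligible inflation of the constants makes the inequalities strict.

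The step I expect to be the real obstacle is making the estimate uniform in the free variable, precisely because a discontinuity may lie on the diagonal $t=s$ (as it does at $(0,0)$ and $(T,T)$ in our application): there the localisation region around $p_j$ overlaps the singularity of the weight, so one cannot bound $|t-s|^{-\beta}$ away from zero. The centred-interval estimate above is what resolves this, since it controls $\int_I|t-s|^{-\beta}\,dt$ over short intervals $I$ uniformly in the position of $s$, whether or not the singularity falls inside $I$; this is the only place where the restriction $\beta<1$ is genuinely needed.
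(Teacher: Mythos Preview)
Your argument is correct and follows essentially the same route as the paper: modify $L$ only on shrinking neighbourhoods of the finitely many discontinuities, keep the uniform bound $|L_n|\le L$, and then use integrability of $|t-s|^{-(1-\nu\delta)}$ over short intervals to get the error constants tending to zero. The only difference is cosmetic: the paper defines $L_n$ inside a max-norm box $B(x_0,1/n)$ by the explicit radial interpolation
\[
L_n(x)=\frac{\|x-x_0\|_{\max}}{r_n}\,L\!\left(\frac{r_n(x-x_0)}{\|x-x_0\|_{\max}}+x_0\right),
\]
whereas you use a multiplicative cutoff $L_n=\phi_n L$; both produce continuous, uniformly bounded approximants agreeing with $L$ off a set whose $t$- and $s$-projections have length $O(r_n)$. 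Your centred-interval (rearrangement) estimate actually makes explicit the step the paper only asserts, namely that $\sup_s\int_{D_n}|t-s|^{-1+\nu\delta}\,dt\to 0$, which is precisely the uniformity-in-$s$ issue when a discontinuity sits on the diagonal.
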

\begin{proof} Let us describe  the construction of continuous function $L_{n}$.
Let $x_0=(t_0,s_0)$ be one of the  points of discontinuity,   let   $m$ be the total number of such points, and let $n$ is sufficiently large.
Each point of discontinuity, in particular, $x_0$, can be surrounded   by sufficiently small closed box $B(x_0,r_n)=\{y\in[0,T]:\|y-x_0\|_{\max}\leq r_n\},$ where  $r_n=1/n$ and $\|\cdot\|_{\max}$ is the maximum norm in Euclidean space. Assume that $L_n=L$ outside the union of these small boxes, so, it is necessary to determine $L_n$ only inside each box. Let us put for $x=(t,s) $ and
\begin{equation}\label{Ln:def}
 L_n(x)=\frac{\|x-x_0\|_{\max}}{r_n}      L\left( \frac{r_n (x-x_0)}{\|x-x_0 \|_{\max}}+x_0 \right),\;  x\in B(x_0,r_n).
 \end{equation}
The range of the values of $L_n$ does not exceed the range of values of $L$.  The point $\frac{r_n (x-x_0)}{\|x-x_0 \|_2}+x_0$ is situated on the square $$S=\{y\in [0,T]^2: \|y-x_0\|_{\max}=r_n\},$$
 therefore every $L_n$ is a continuous function, $L_n\in C([0,T]^2).$ Moreover,
 $$\sup_{x\in B(x_0,r_n)}|L_n(x)|\leq \sup_{x\in S(x_0,r_n)}|L(x)|,$$ and
 $$\sup_{x\in [0,T]^2}|L_n(x)|\leq \sup_{x\in[0,T]^2}|L(x)|.$$  Thus,  $L_n$ are totally bounded.

 Now, denote $D_n\subset [0,T]$ the projection of the union of the small boxes surrounding the points of discontinuity of $L$, on $[0,T]$. Evidently, the total Lebesgue measure of $D_n$ does not exceed $\frac{m}{n}\rightarrow 0$ as $n\rightarrow\infty$. Therefore,
 \begin{equation*}\begin{gathered}C_{1,n}:=\sup_{s\in[0,T]}\int_0^T|K(t,s)-K_n(t,s)|^{\frac{1}{\nu}-\delta}dt\\ \le \sup_{(t,s)\in [0,T]^2}|L(t,s)| \sup_{s\in[0,T]}\int_{D_n}|t-s|^{-1+\nu\delta}dt\rightarrow 0,\end{gathered}\end{equation*}
  as $n\rightarrow\infty$, and $C_{2,n}$ can be introduced and treated similarly, whence the proof follows.
 \end{proof}
 \begin{remark} Of course, there can be different ways of construction of the functions $L_n$.  In what follows, for us will be important to construct them in such a way that the approximating operators be self-adjoint.
 \end{remark}
 \begin{remark}
 We can apply Proposition \ref{prop:1} and Corollary \ref{cor:1} to operator $(A_n-A)x:=\int_0^T [K_n(\cdot,s)-K(\cdot,s)]x(s)ds, x\in L_2([0,T])$ for $\left(\frac{1}{\nu}-2\right)_+<\delta<\frac{1}{\nu}-1.$  Then $A_n-A$ is a compact operator from $L_2([0,T])$ to $L_2([0,T])$ with  $$\|A_n-A\|\leq C_{1,n}^{\frac{\nu}{1-\delta\nu}-\frac{1}{2}}C_{2,n}^{\frac{1}{2}}\to 0,\quad\text{ as  }n\to \infty.$$
 \end{remark}
 Now, let us return to equation \eqref {Fredh_gen} and specify the assumption regarding  the integral operator $A$.
\begin{lemma}
Let the integral operator $A$ is compact from $L_2([0,T])$ into $L_2([0,T])$ and positive, in particular, self-adjoint, and let  $g \in L_2([0,T])$. Then there exists a unique function $x=x(t), t \in [0,T], x\in L_2([0,T])$ which satisfies \eqref {Fredh_gen}.
\end{lemma}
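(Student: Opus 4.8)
The plan is to recast the Fredholm equation \eqref{Fredh_gen} as the operator equation $(I+A)x=g$ in the Hilbert space $L_2([0,T])$, and then to prove that $I+A$ is a bounded bijection of $L_2([0,T])$ onto itself whose inverse is bounded. Once this is established, the unique solution is simply $x=(I+A)^{-1}g\in L_2([0,T])$, and both existence and uniqueness follow at once. The structural fact that makes this work is that $A$ is compact (in our application this is guaranteed by Corollary \ref{cor:1}), so that the Fredholm alternative is available: for a compact operator $A$, the operator $I+A$ is boundedly invertible \emph{if and only if} it is injective, i.e. $\ker(I+A)=\{0\}$, and in that case the bounded inverse comes for free from the open mapping theorem.

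The decisive step, and the only one that uses a hypothesis beyond compactness, is to verify injectivity of $I+A$, equivalently to rule out $-1$ as an eigenvalue of $A$. Here positivity does all the work. Suppose $(I+A)x=0$ for some $x\in L_2([0,T])$; then $Ax=-x$, and therefore $\langle Ax,x\rangle=-\langle x,x\rangle=-\|x\|^2$. On the other hand, positivity of $A$ gives $\langle Ax,x\rangle\ge 0$. Combining the two relations yields $\|x\|^2\le 0$, hence $x=0$, so $\ker(I+A)=\{0\}$ as required.

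A slightly more conceptual route to the same conclusion, which I would mention as an alternative, is spectral: since $A$ is positive and self-adjoint, its spectrum satisfies $\sigma(A)\subseteq[0,\infty)$, so $-1$ lies in the resolvent set of $A$. By definition this means that $A-(-1)I=I+A$ is boundedly invertible on $L_2([0,T])$, again giving $x=(I+A)^{-1}g$ as the unique solution. Either argument delivers the result.

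I do not anticipate a genuine obstacle here: the statement is a textbook consequence of the Fredholm theory of compact operators combined with positivity. The only point demanding care is the correct bookkeeping of the Fredholm alternative, namely that it applies precisely because $A$ is compact, so that injectivity of $I+A$ automatically upgrades to surjectivity and to bounded invertibility of the inverse; the positivity assumption enters solely to exclude the eigenvalue $-1$ via the sign of $\langle Ax,x\rangle$.
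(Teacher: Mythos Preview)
Your proof is correct and follows essentially the same approach as the paper: use positivity of $A$ to conclude that $-1$ is not an eigenvalue (equivalently, the homogeneous equation has only the trivial solution), and then invoke the Fredholm alternative for the compact operator $A$ to obtain existence and uniqueness. The paper's proof is a one-sentence version of exactly this argument; your elaboration via $\langle Ax,x\rangle\ge 0$ and the spectral reformulation $\sigma(A)\subseteq[0,\infty)$ simply makes the same reasoning explicit.
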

\begin{proof} It is just sufficient to mention that all eigenvalues of operator $A$ are real and nonnegative, therefore the respective homogeneous equation has only trivial solution, and the proof immediately follows from the Fredholm alternative.\end{proof}
Now, let us establish the main result of this section, namely, the theorem on the approximation  of solution of integral equation with the kernel containing additional singularity by the solutions of the integral equations whose kernels are of type \eqref{Fredh_kern}, but the numerator is continuous.
\begin{theorem}\label{ContKern}
Let $K(t,s)$  be the kernel defined in \eqref{Fredh_kern}, where the numerator $L(t,s)$ has the following properties
\begin{enumerate}[(i)]
\item $L$ is bounded and symmetric.
\item $L$ is continuous, except finite number of points.
\item $L$ is a  positively definite kernel.
\end{enumerate}
Let $x\in L_2([0,T])$ be a unique solution of equation \eqref{Fredh_gen}.
Then the sequence of functions  $L_{n} (t,s)$ satisfying conditions of Lemma \ref{approx} can be chosen in such a way that the respective integral operators are self-adjoint, for sufficiently large  $n\ge 1$   the equation
$$ x_n(t)+ \left(A_nx\right)(t)=g(t),\; (A_nx)(t)= \int_0^T K_n(t,s) x(s)ds, 0 \leq t \leq T,\; K_n(t,s)=\frac{L_n(t,s)}{|t-s|^{\nu}},$$
has  a unique solution $x_n\in  L_2([0,T])$,
and \begin{equation}\nonumber
\| x_{n} -x \|_{L_2([0,T])}\rightarrow 0,\;\text{as}\;n\rightarrow \infty.
\end{equation}
\end{theorem}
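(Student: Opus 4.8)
The plan is to realize $x$ and $x_n$ as $x=(I+A)^{-1}g$ and $x_n=(I+A_n)^{-1}g$, and then to control $x_n-x$ through a uniform bound on the resolvents $(I+A_n)^{-1}$. First I would refine the construction of Lemma \ref{approx} so that each $L_n$ is symmetric: since $L$ is symmetric, its finite set of discontinuity points is invariant under the reflection $(t,s)\mapsto(s,t)$, and the defining formula \eqref{Ln:def} uses the reflection-invariant norm $\|\cdot\|_{\max}$, so that $L_n(t,s)=L_n(s,t)$ automatically. Hence each $A_n$ is self-adjoint; by Corollary \ref{cor:1} it is also compact on $L_2([0,T])$, and by the Remark establishing $\|A_n-A\|\to 0$ we have norm convergence $A_n\to A$. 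Moreover $A$ is compact (Corollary \ref{cor:1}) and positive, the positivity coming from hypothesis (iii) together with the positive-definiteness of the singular factor $|t-s|^{-\nu}$ (Schur/Hadamard product), so that $K$ itself is a positive-definite kernel.

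Next I would settle the unique solvability of $x_n+A_nx_n=g$ for large $n$. Because $A$ is compact and positive and $\|A_n-A\|\to 0$, Lemma \ref{lem:oper} gives $\dist(\sigma(A_n),\mathbb{R}^+)\to 0$. Since each $A_n$ is self-adjoint, its spectrum is real, and for real $\lambda$ one has $\dist(\lambda,\mathbb{R}^+)=(-\lambda)_+$; thus the infimum $a_n:=\inf\sigma(A_n)$ satisfies $a_n\ge-\eps_n$ with $\eps_n\to 0$. In particular $a_n>-\tfrac12$ for all $n$ large enough, so $-1\notin\sigma(A_n)$. As $A_n$ is compact, the Fredholm alternative then shows that $I+A_n$ is a bijection of $L_2([0,T])$, whence $x_n:=(I+A_n)^{-1}g$ exists and is unique.

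Finally I would prove convergence via a standard perturbation identity. Subtracting $x+Ax=g$ from $x_n+A_nx_n=g$ gives $(I+A_n)(x_n-x)=(A-A_n)x$, hence
\[
\|x_n-x\|_{L_2([0,T])}\le \|(I+A_n)^{-1}\|\,\|A-A_n\|\,\|x\|_{L_2([0,T])}.
\]
Here the self-adjointness of $A_n$ pays off: for a self-adjoint operator $\|(I+A_n)^{-1}\|=\dist(-1,\sigma(A_n))^{-1}=(1+a_n)^{-1}\le 2$ for all $n$ large. Since $\|A-A_n\|\to 0$, the right-hand side tends to $0$, giving $\|x_n-x\|_{L_2([0,T])}\to 0$.

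The main obstacle is conceptual rather than computational: the approximants $L_n$ need not inherit positive-definiteness from $L$, so the operators $A_n$ need not be positive and the solvability lemma cannot be invoked for them directly. The key realization is that one does not need positivity of $A_n$, only that $-1$ stays out of $\sigma(A_n)$ and that the resolvent norm stays bounded — and both are secured by combining the positivity of the limit $A$ with the norm convergence $\|A_n-A\|\to 0$ through Lemma \ref{lem:oper}. The remaining work is bookkeeping: ensuring the symmetry of the $L_n$ (so the spectra are real and the resolvent bound is exact) and restricting to the range of $n$ beyond the threshold where $a_n>-\tfrac12$.
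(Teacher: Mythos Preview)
Your proposal is correct and follows essentially the same route as the paper's proof: symmetrize $L_n$ via the reflection-invariance of the construction in Lemma~\ref{approx}, invoke Lemma~\ref{lem:oper} to push $\sigma(A_n)$ asymptotically into $\mathbb{R}^+$, and then combine the perturbation identity $(I+A_n)(x_n-x)=(A-A_n)x$ with the self-adjoint resolvent bound $\|(I+A_n)^{-1}\|\le \dist(-1,\sigma(A_n))^{-1}$ to conclude. Your extra remark that positivity of $A$ follows from hypothesis~(iii) via the Schur product with the positive-definite factor $|t-s|^{-\nu}$, and your observation that positivity of $A_n$ is not needed, are useful clarifications that the paper leaves implicit.
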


\begin{proof} Let us choose $L_n$ according to Lemma \ref{approx}. Then for the respective integral operators, according to
Lemma \ref{approx} and Proposition \ref{prop:1} we have that $\|A-A_n\|\rightarrow 0$ as $n\rightarrow\infty$.
Show that $L_n$ is symmetric which yields that $A_n$ is self-adjoint. Let $D$ be the union of  the points of discontinuity of $L.$ Denoting by $x^*=(x_2,x_1)$ for $x=(x_1,x_2)\in [0,T]^2$ we have from symmetry of $L$ that if $x_0\in D,$ then $x^*_0\in D$ as well. Then
from \eqref{Ln:def} we have  $L_n(x^*)=L(x)=L(x^*)=L_n(x^*)$ for all points $x$ outside  $\cup_{x_0\in D}B(x_0,r_n).$ Assume that $n$ is large enough that all $B(x_0,r_n), x_0\in D$ are disjoint.  Let $x\in B(x_0,r_n)$ for some $x_0\in D,$ then $x^*\in B(x^*_0,r_n)$ and it follows from the construction of $L_n$ that $L_n(x^*)=\frac{\|x^*-x^*_0\|_{\max}}{r_n}      L\left( \frac{r_n (x^*-x^*_0)}{\|x^*-x^*_0 \|_{\max}}+x^*_0 \right)=\frac{\|x-x_0\|_{\max}}{r_n}      L\left( \left(\frac{r_n (x-x_0)}{\|x-x_0 \|_{\max}}+x_0\right)^* \right)=L_n(x).$

According to Lemma \ref{lem:oper}, spectrum $\sigma(A_n)$ is asymptotically included into $\mathbb{R}^+$. Moreover, these operators are compact from $L_2([0,T])$ into $L_2([0,T])$. It means that for sufficiently large  $n\ge 1$ integral equation $ x_n(t)+ \left(A_nx\right)(t)=g(t)$ has the unique solution $x_n\in L_2([0,T])$. Now, consider the difference
\begin{equation}\nonumber
\begin{gathered}
x(t)- x_{n} (t)= - \int_0^T K(t,s) x(s)ds+ \int_0^T K_{n} (t,s) x_{n} (s)ds\\
= -\int_0^T \left[ K(t,s) -K_{n} (t,s) \right] x(s)ds - \int_0^T K_{n} (t,s) \left[ x(s)- x_{n}(s)\right]ds.
\end{gathered}
\end{equation}
Denoting $x(t)-x_{n}(t)= y_n(t)$, we get the    Fredholm integral equation of the form

\begin{equation}\nonumber
y_{n} (t)+\int_0^T K _n (t,s)  y_{n} (s) ds=P_{n}(t),
\end{equation}
where $P_{n}(t)= \int_0^T \left[ K_{n}(t,s) -K (t,s) \right] x(s)ds.$ Since $\|A-A_n\|\rightarrow 0$ as $n\rightarrow\infty$, we have that $\| P_{n}\|_{L_2([0,T])}\rightarrow 0$ as $n\rightarrow\infty$. Now, denote the resolvent operator at point $\lambda$: $R_{A_n}(\lambda)=(A_n - \lambda I) ^{-1}$.
We shall use the following fact (see e.g. \cite[Theorem 5.8]{Hislop}): let $B$ be a self-adjoint operator, and let $\lambda \in \rho (B)$, where $\rho (B)=\mathbb{R}\setminus\sigma(B)$. Then
\begin{equation}
\| R_B (\lambda)\| \leq \frac{1}{\dist(\lambda, \sigma(A))}.
\end{equation}
In our case $\lambda=-1$, and we know that the spectrum $\sigma(A_n)$ is asymptotically included into $\mathbb{R}^+$. Therefore for sufficiently large $n$ $\|R_{A_n}(-1)\|\leq C$, and we get that
$$\|y_n\|_{L_2([0,T])}\leq C\| P_{n}\|_{L_2([0,T])}\rightarrow 0$$ as $n\rightarrow\infty$. The proof is concluded.
 \end{proof}

\section{Numerical solution}\label{sec4}
\subsection{Description of the numerical method}\label{subsec11}

In our paper, we use a modified product-integration method, as proposed in \cite{Prem} for weakly singular kernels. Under this method we assume that the kernel $K$ is factorized as  kern  have
\begin{equation}\nonumber
K(t,s)=u(t,s)L(t,s),
\end{equation}
where $u(t,s)$ is singular in $\{(t,t),t\in (0,T)\}$ and $L(t,s)$ is a regular function of its arguments.  Then the Fredholm integral equation of the second kind is written as
\begin{equation}\label{8_4_4}
x(t)=f(t)+\int_0^T u(t,s)L(t,s)x(s)ds.
\end{equation}
The main mechanism of product-integration method is presented in Section 4.2 of \cite{Prem}.

There are several different modifications of this method, see e.g. the corresponding chapters in books \cite{Atk1976, Baker78, Hoog73, Delves85}. In particular, the authors of \cite{Hoog73} prove that their modification works for a kernel of the form
\begin{equation}\nonumber
K(t,s) = u (t,s)  L (t,s), t,s\in [0,T]
\end{equation}
where the functions $u (t,s)$ and $L (t,s)$ satisfy the following conditions
\begin{enumerate}[(i)]
\item $L (t,s) \in C[0,T],$
\item $\int_0^T |u(t,s)|ds < \infty,$
\item $\lim_{|t_1-t_2| \to 0} \int_0^T |u(t_1,s) - u(t_2,s) | ds =0$  uniformly in $t_1$ and $t_2$.
\end{enumerate}
The above conditions hold true for the kernels of potential-type with $u(t,s)=|t-s|^{-\nu},\nu\in (0,1).$  Consequently, general product-integration method works in the case of a continuous numerator $L$ (and hence all its modifications).

In this article, by computational reasons, we have chosen the modification proposed by B. Neta in \cite{Neta}.  Here we present its main steps.
We start with a given number $N \geq 1$ of equally spaced points
$0= t_1 < t_2 < ... < t_N =T$ and $h_i= t_{i+1}-t_i=\frac{T}{N-1}$.  By the product-integration rule
\begin{equation}\nonumber
\int_{t_i}^{t_{i+1}}L(t,s)|t-s|^{-\nu}x(s)ds\approx \int_{t_i}^{t_{i+1}} \frac{L_i(t)(t_{i+1}-s)x_i + (s-t_i)L_{i+1}(t)x_{i+1}}{ t_{i+1}-t_i}|t-s|^{-\nu}ds,
\end{equation}
where $L_i(t)=L(t,t_i)$ and $x_i=x(t_i)$. Then we have
\begin{equation}\label{f_sum}
x(t)=\sum_{i=1}^{N-1} \left(L_i(t) x_i \psi_{i, i+1}^{1}(t)+ L_{i+1} x_{i+1} \psi_ {i, i+1}^{2} (t)\right)+g(t),
\end{equation}
where weights are assigned as follows
\begin{equation}\label{psi_int}
\begin{gathered}
\psi_{i+1, i}^{1} (t) = \frac{1}{t_{i+1}-t_i} \int_{t_i}^{t_{i+1}} (t_{i+1}-s) |t-s|^{-\nu} ds,\\
\psi_{i-1, i}^{2} (t) = \frac{1}{t_i-t_{i-1}} \int_{t_{i-1}}^{t_i} (s-t_{i-1}) |t-s|^{-\nu} ds.
\end{gathered}
\end{equation}
Substituting $t=t_j$ in \eqref{f_sum} and combining two sums we obtain the following system of linear equations for approximation of integral equation \eqref{8_4_4}
\begin{equation}\label{fj_sum}
x_j=\sum_{i=1}^N L_{j,i} \left( \psi_{i, i+1, j}^{1}+ \psi_{i-1, i, j}^{2} \right) x_i  +f_j, j = 1, \ldots, N,
\end{equation}
where $L_{j,i}=L(t_i,t_j),$ $\psi_{i, j, k} ^{l} = \psi_{i, j}^l (t_k)$ for $l =1,2,$ and $f_j=f(t_j)$.
In \eqref{fj_sum} we assume that $\psi_{N, N+1, j}^{1} = \psi_{0, 1, j}^{2} = 0$, for all $j$.
The integrals in \eqref{psi_int} are evaluated exactly and the values of $\psi_{i j k} ^{l}$ are computed separately for the cases $k=i$ and $k=j$. It can be shown that
\begin{align*}
&\psi_{i, i+1, i}^{1}= \frac{(t_{i+1}-t_i)^{1-\nu}}{(1-\nu)(2-\nu)}, \quad \psi_{i, i+1, i+1}^{1}= \frac{(t_{i+1}-t_i)^{1-\nu}}{2-\nu},\\
&\psi_{i, i+1, j}^{1}= \frac{|t_{i+1}-t_j|}{t_{i+1}-t_i} \frac{|t_{i+1}-t_j|^{1-\nu} - |t_i-t_j|^{1-\nu}}{1-\nu} - \frac{|t_{i+1}-t_j|^{2-\nu} - |t_i-t_j|^{2-\nu}}{(2-\nu)(t_{i+1}-t_i)}, j \neq i, i+1,\\
&\psi_{i-1, i, i-1}^{2}= \frac{(t_i-t_{i-1})^{1-\nu}}{2-\nu}, \quad \psi_{i-1, i, i}^{2}= \frac{(t_i-t_{i-1})^{1-\nu}}{(1-\nu)(2-\nu)},\\
&\psi_{i-1, i, j}^{2}= \frac{|t_i-t_j|^{2-\nu}- |t_{i-1}-t_j|^{2-\nu}}{(t_i-t_{i-1})(2-\nu)}
-\frac{|t_j-t_{i-1}|}{t_i-t_{i-1}} \frac{|t_i-t_j|^{1-\nu} - |t_{i-1}-t_j|^{1-\nu}}{1-\nu}, j \neq i-1, i.
\end{align*}
The system \eqref{fj_sum} can be written in matrix form
\begin{equation}\label{linear_eq}
\mathbf{X}_N = \mathbf{K}_N \textbf{X}_N + \textbf{G}_N,
\end{equation}
where $\mathbf{X}_N$ and $\mathbf{G}_N$ are vectors whose components are $x_i$ and $g_i,$ $i=1,\ldots,N$ respectively, and $\mathbf{K}_N$ is $N\times N$-matrix with components $K_{i,j}= L_{i,j}\left(\psi_{j, j+1, i}^{1}+ \psi_{j-1, j, i}^{2} \right), i,j=1\ldots,N.$
The approximate solution  is obtained by solving a linear system of algebraic equations \eqref{linear_eq}.

Let us now describe the application of the presented method for solving of equation \eqref{Feq}
\begin{align}
 &x(t) + C_2(H) \int_0^T  \frac{L(t,v)}{|t-v|^{2H}} dv= f(t),  t\in [0,T], \text{ if } H\in \left(0,\frac{1}{2}\right)  \\
 \nonumber &\text{ with } L(t,v)=B\left(\frac{T/(t\vee v)-1}{T/(t\wedge v)-1},\frac{1}{2}-H,2H\right);\\
 &x(t) + C_2(H)B\left(H-\frac12,2-2H\right) \int_0^T  \frac{dv}{|t-v|^{2-2H}}  =f(t),  t\in [0,T], \text{ if } H\in \left(0,\frac{1}{2}\right).
\end{align}
Thus, we get immediately system of equations \eqref{fj_sum} with $L_{i,j}=1$ in the case $H\in \left(0,\frac{1}{2}\right).$
If $H<1/2,$ the numerator $L$ has additional singularity in two points $(0,0)$ and $(T,T)$ and we solve system of equations \eqref{fj_sum} with $L_{i,j}=L_n(t_i,t_j),$ where the approximation $L_n$ of $L$, given by \eqref{Ln:def}. Note that $L_n(t_i,t_j)=L(t_i,t_j),$ for $i,j\neq (1,1)$ and $i,j\neq (N,N)$ if $n\geq\frac{N-1}{T}.$ In this case, $L_n(t_1,t_1)=L_n(t_N,t_N)=0.$ For simplicity, we put always $n=\frac{N-1}{T}$ in our computations.

\subsection{Numerical illustrations}\label{subsec12}
In this section we provide several numerical experiments of solving equation \eqref{Feq} performed for different values $H$ and functions $f$.

First, we provide in Figure \eqref{Fig1} the graph of $L(t,v)=B\left(\frac{T/(t\vee v)-1}{T/(t\wedge v)-1},\frac{1}{2}-H,2H\right),$ $(t,v)\in[0,T]^2$ in the case $H<1/2$ for better understanding of singularity in the numerator.
\begin{figure}
    \centering
    \begin{minipage}{0.3\textwidth}
		\center{\includegraphics[width=\textwidth]{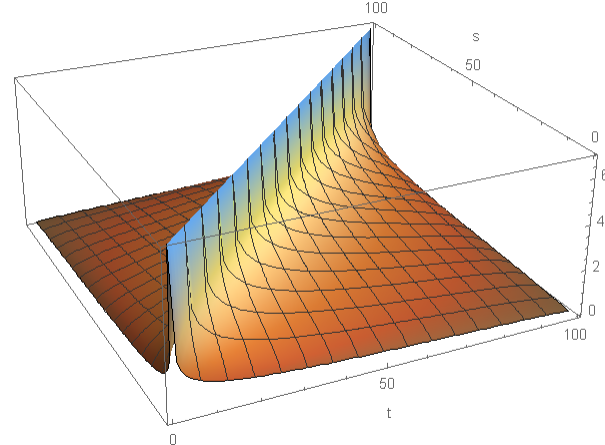}}
		\center{$H=0.1$}
	\end{minipage}
    \begin{minipage}{0.3\textwidth}
		\center{\includegraphics[width=\textwidth]{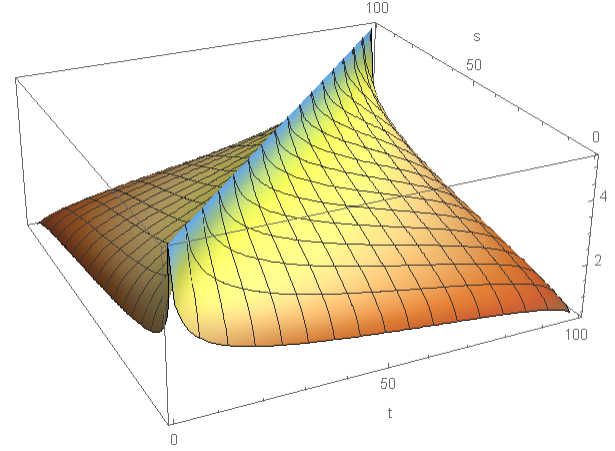}}
		\center{$H=0.25$}
	\end{minipage}
    \begin{minipage}{0.3\textwidth}
		\center{\includegraphics[width=\textwidth]{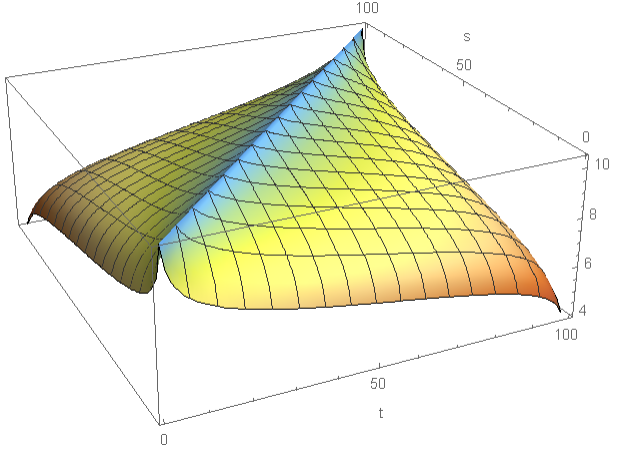}}
		\center{$H=0.4$}
	\end{minipage}
    \caption{The values of the numerator $L(t,v)$ for different values of $H.$}
    \label{Fig1}
\end{figure}

\begin{example}
Consider  equation \eqref{Feq} with the simple linear function $f(t)=t,t\in [0,1].$ We take $N=500,$ and present approximated solution $x$ with $H=0.05,0.2,0.4,0.49$ in Figure \eqref{Fig2}. For $H=0.51,0.6,0.8,0.95$ we present the graph of $f-x.$ Thus we obtain the graphs of solution $f_1$ of minimization problem \eqref{minpr}. Note that operator $A$ corresponding to equation \eqref{Feq} tends to identity operator, as was shown in \cite{Makmish}. Therefore, the solution $f_1$ tends to $\{t/2,t\in [0,1]\},$ which is confirmed by numerical solutions from Figure \ref{Fig2}.
\begin{figure}[htbp]
 \centering
 \label{Fig2}
  \includegraphics[width=0.47\linewidth]{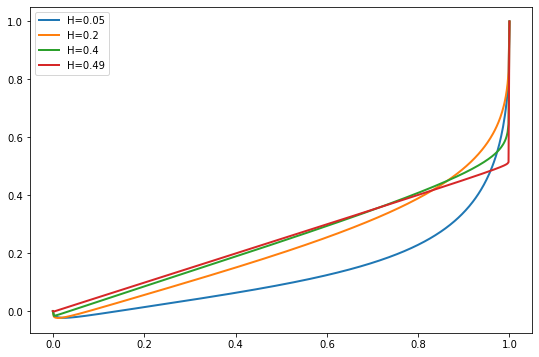}
   \includegraphics[width=0.47\linewidth]{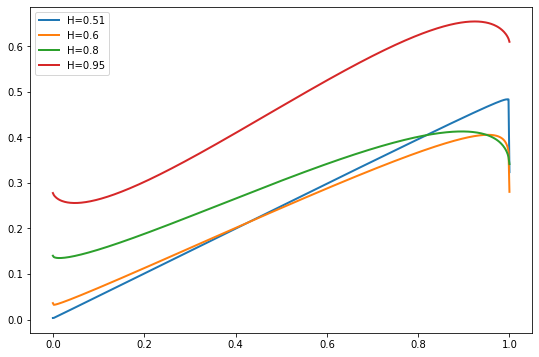}
   \caption{The numerical solution of the optimization problem \eqref{minpr} for various cases of Hurst index $H$ and $f(t)=t.$}
\end{figure}

\end{example}

\begin{example}
Here, we compare the approximate solution of equation \eqref{Feq} with the exact solution given by
\begin{equation}\nonumber
  x_2(t)=\begin{cases}
    8 \mathbbm{1}_{\lbrace x \leq 0.5 \rbrace} - \mathbbm{1}_{\lbrace 0.5 < x \leq  0.75 \rbrace} +4 \mathbbm{1}_{\lbrace 0.75  < x \leq 1 \rbrace} , & H \in (0,1/2),\\
    6-7t, & H \in (1/2,1).
  \end{cases}
\end{equation}
The right-hand side of \eqref{Feq} is computed by $f_2(t)=x_2(t)+C_2(H)\int_0^1 \varkappa_H(t,s)x_2(s)ds.$ The graphs of numerical solution and function $f$ are presented in Figure \ref{Fig3}. One can mention that approximate solution visually indistinguishable with the exact solution.
 \begin{figure}[htbp]
 \centering
 \label{Fig3}
  \includegraphics[width=0.45\linewidth]{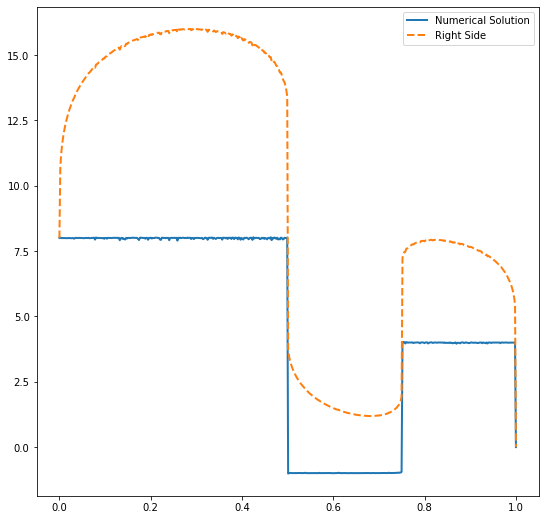}
  \includegraphics[width=0.45\linewidth]{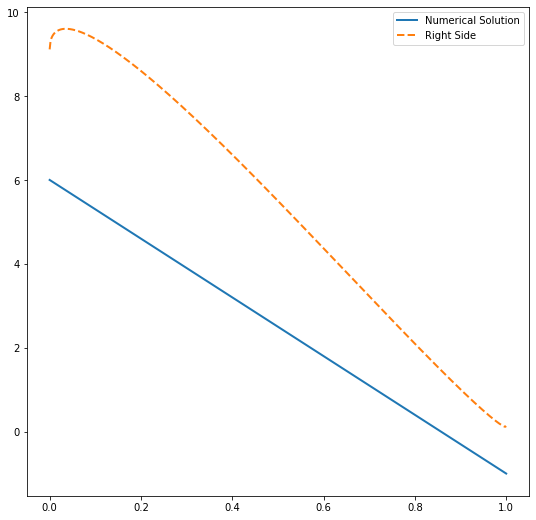}
  \caption{Numerical approximations of $x_2$ with $N=500$ for $H=0.25$ (left) and $H=0.75.$}
\end{figure}
\end{example}
In both examples, we obtain the solutions which can be negative on some interval and the right-hand side is non-negative simultaneously. This answers negative to the question about the existence of admissible optimal distribution $f_1=\alpha f$ and $f_2 (1-\alpha)f$ such that $0\leq \alpha(t)\leq 1,t\in [0,T].$

\begin{example}
In this example we study the sensitivity of $\max$ and $L_2[0,1]$-errors between the exact and approximate solutions with respect to the number $N.$ We provide the analysis for the quadratic solution $x_3(t)=t^2,t\in [0,1]$ and $H>1/2.$ In this case the right hand side of \eqref{Feq} equals
\begin{equation}\nonumber
  f_3(t)=t^2+C_3(H)\left[\frac{t^{2H+1}}{H(4H^2-1)}+\frac{(1-t)^{2H+1}}{2H+1}+\frac{t(1-t)^{2H}}{H}+\frac{t^2(1-t)^{2H-1}}{2H-1}\right],
\end{equation}
where $C_3(H)=C_2(H)B\left(H-\frac12,2-2H\right).$

The  maximum absolute error between the approximate and exact solution are presented in Table \ref{tb1}, the values of the $L_2-$norm of the error is listed in Table \ref{tb2}. From the computed values, we can vaguely estimate the error as $O(N^{-2}).$
\begin{table}[]
\centering
\caption{Maximum norm of the error scaled by $10^7$}
\label{tb1}
\begin{tabular}{l|rrrr}
$N$  & {$H = 0.51$} & {$H = 0.6$} & {$H = 0.8$} & {$H = 0.95$}    \\
\hline
\hline
25  & 149.26	& 946.55	& 1708.59	& 2697.23 \\
50 &40.19	& 242.03	& 414.26	& 647.79\\
100 & 10.89	& 62.43	& 102.18	& 158.78\\
200 &   2.95	&   16.12	&   25.40	&   39.31\\
300 & 1.37	& 7.30	& 11.27	& 17.41\\
500 & 0.52	& 2.68	& 4.05	& 6.25\\
\hline
\end{tabular}
\end{table}
\begin{table}[]
\centering
\caption{$L_2([0,1])$-norm of the error scaled by $10^7$}
\label{tb2}
\begin{tabular}{l|rrrr}
$N$  & {$H = 0.51$} & {$H = 0.6$} & {$H = 0.8$} & {$H = 0.95$}    \\
\hline
\hline
25  & 140.70	&892.52	&1630.11&2665.54  \\
50 & 38.21	&230.11	&396.76&	640.43 \\
100 & 10.42	&59.68	&98.07&	157.03 \\
200 &   2.83	&15.47&	24.41&	38.88 \\
300 & 1.32	&7.01&	10.84&	17.23 \\
500 & 0.50	&2.58&	3.90&	6.19 \\
\hline
\end{tabular}
\end{table}
\end{example}

\bibliographystyle{acm}
\bibliography{lit}

\end{document}